\documentclass{article}
\usepackage{graphicx}
\usepackage{amssymb}
\usepackage{amsmath}
\evensidemargin0cm
\oddsidemargin0cm
\textwidth16cm
\textheight23cm
\topmargin-2cm

\newcommand{\eq}{\begin{equation}}
\newcommand{\en}{\end{equation}}

\newcommand{\ed}{ \stackrel{d}{=}}

\newcommand{\mn}{\mathbb{N}}

\newtheorem{theorem}{\large Theorem} 
\newtheorem{proposition}[theorem]   {\large Proposition}

\newtheorem{lemma}[theorem]{\large  Lemma}

\font\tenmath=msbm10
\font\sevenmath=msbm7
\font\fivemath=msbm5
\newfam\mathfam \textfont\mathfam=\tenmath
\scriptfont\mathfam=\sevenmath \scriptscriptfont\mathfam=\fivemath

\def \\ { \cr }


\begin{document}

\title{Exponential--Uniform Identities Related to Records }
\author{Alexander Gnedin\thanks{Queen Mary University of London, email: a.gnedin@qmul.ac.uk}~~and~~Alexander Marynych\thanks{Eindhoven University of Technology, email: O.Marynych@tue.nl}}

\date{}
\maketitle

\begin{abstract}
\noindent 
We consider a rectangular grid induced by the south-west records
 from the planar Poisson point process  in ${\mathbb R}_+^2$.
A random symmetry property of the matrix
whose entries are the areas of  tiles of the grid implies
 cute multivariate distributional identities for
certain rational functions of independent
exponential and uniform random variables.
\end{abstract}

{\it Keywords:} records, planar Poisson process, distributional identities
\vskip0.5cm
AMS 2000 Subject Classification: Primary 60G70, 60E99

\paragraph{1. Introduction}
Let $E_1,E_2,\dots$ and $U_1,U_2,\dots$ be two independent sequences of independent rate-one exponential
and $[0,1]$-uniform random variables,
respectively.
A prototype of the distributional identities appearing in this note is the  identity
\begin{equation}\label{B=C}
\left({E_1\over U_1}+{E_2\over U_1U_2}+ \ldots+{E_n\over U_1\cdots U_n}\right) \left(1-U_1\cdots U_{n+1}\right)
\stackrel{d}{=}\left(E_1+{E_2\over U_1}+ \ldots+{E_{n+1}\over U_1\cdots U_n}\right) \left(1-U_1\cdots U_{n}\right),
\end{equation}
which  was used in \cite{PPP} to explain coincidence of 
the values in two quite different problems of optimal stopping.
Some probabilities related to 
the simplest instance of (\ref{B=C}),
\begin{equation}\label{B=Csmall}
{E_1\over U_1}(1-U_1U_2)\stackrel{d}{=}
\left(E_1+{E_2\over U_1}\right)(1-U_1),
\end{equation}
had been evaluated in \cite{Samuels}.

 We will show that (\ref{B=C}) along with more general identities for matrix functions in the exponential and uniform variables follow from  symmetry properties of the set of {\it records} (also known as Pareto-extremal points \cite{Yul}) from the planar Poisson process in the positive quadrant.
This continues the line of  \cite{Corners}, where it was argued that the planar Poisson process is a natural 
framework  for two  gems of combinatorial probability: Ignatov's theorem \cite{Ignatov}
and the Arratia-Barbour-Tavar{\'e} lemma on  the  scale-invariant Poisson processes on ${\mathbb R}_+$ \cite{ABTLemma}.


We shall evaluate the areas of tiles
for a rectangular grid induced by the set of records.
The identities obtained in this way are genuinely multivariate,
albeit  they stem from the arrays with identical marginal  distributions.
In particular, (\ref{B=Csmall}) appears by a row summation in
\begin{eqnarray}\label{cc}
\left( \begin{array}{cc}
(1-U_1)E_1&(1-U_1){E_2\over U_1}\\
\\
U_1(1-U_2)E_1& (1-U_2)E_2
\end{array}\right)\stackrel{d}{=}
\left( \begin{array}{cc}
(1-U_2)E_2&(1-U_1){E_2\over U_1}\\
\\
U_1(1-U_2)E_1& (1-U_1)E_1
\end{array}\right).
\end{eqnarray}

\paragraph{2. A random tiling induced by records}
 We use the self-explaining notations $\nearrow,\,\searrow,\,\swarrow,\,\nwarrow$ for four  
 coordinate-wise partial orders on the positive quadrant.
For instance, relations $a\nearrow b$ and  $b\swarrow a$ for $a,b\in{\mathbb R}^2_+$ both mean that $b$ is located strictly
north-east of $a$.

Let ${\cal P}$ be the planar Poisson point process with unit intensity in ${\mathbb R}_+^2$.
It will be convenient to understand $\cal P$ as a random set, rather than counting measure.
The event $(t,x)\in{\cal P}$ is interpreted as the value $x$ observed at time $t$.
Note that with probability one no two atoms of $\cal P$ lie on the same vertical or horizontal line.
An atom $r\in {\cal P}$ is said to be a (lower) {\it record} if  there is no earlier
observation with a smaller value, that is $a\nearrow r$ for no $a\in {\cal P}$.
The set of records, denoted henceforth $\cal R$,  is a point process with the intensity function $e^{-tx}$. 
The collection of records ordered by increase of the time component is a two-sided infinite $\searrow$-chain
$$\ldots\searrow r_{-2}\searrow r_{-1}\searrow r_{0} \searrow r_1\searrow r_2\searrow\ldots,$$
which we label by nonzero integers in such a way that the records  $r_{0}$ and $r_1$ are separated by the bisectrix $t=x$.

\par  Drawing vertical  lines at $t$-locations of records (record times), and drawing horizontal lines at their $x$-locations  (record values)
divides the positive quadrant into rectangular tiles.
Let $C_{ij}$ be the area of the tile whose north-west corner is
the intersection point
of the horizontal line through $r_i$ and the vertical line  through $r_j$ (see Figure 1). 
In particular, $C_{ii}$ for $i\in\mathbb{Z}$ is the area of a tile
spanned on  records $r_i, r_{i+1}$.

\begin{figure}
\begin{center}
\includegraphics[scale=0.6]{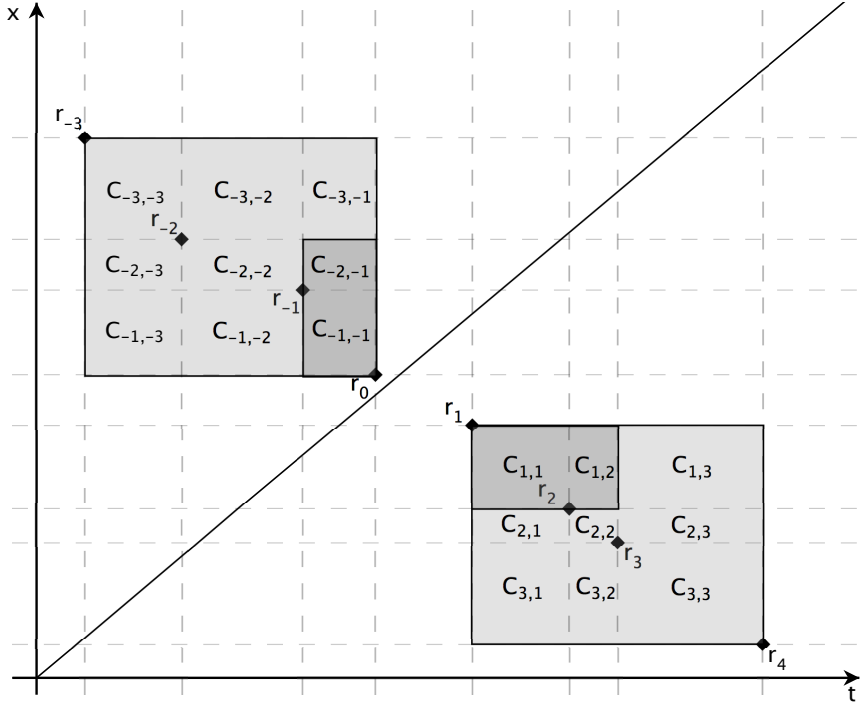}
\caption{The rectangular tiling and areas with the same distribution.}
\end{center}
\end{figure}

Given a record at location $(t,x)$, the next record is 
just the next point of $\cal P$ south-west of $(t,x)$, hence
 distributed like $(t+E/x, xU)$, as is easily seen from 
the independence and homogeneity properties of $\cal P$. 
The sequence $r_1, r_2,\dots$ is Markovian with  just described transitions, 
whence 
\begin{eqnarray}\label{R}
(C_{ij}\,;\, i,j=1,2,\ldots)&\stackrel{d}{=}&
\left(U_1\cdots U_{i-1}(1-U_i)\,{E_j\over U_1\cdots U_{j-1}}\,\,;~ i,j=1,2,\ldots\right).
\end{eqnarray}
Note that the left-hand side of (\ref{R}) is independent of $r_1$.
Since the law of $\cal R$ is 
not changed by reflection about the bisectrix, we also have
\begin{equation}
\label{Rrefl}
(C_{i,j}\,;\, i,j=1,2,\ldots) \stackrel{d}{=}
(C_{-j-1,-i-1}\,;\, i,j=1,2,\ldots).
\end{equation}
As an illustration, the areas of two rectangles (Figure 1) spanned on $r_1, r_4$ and $r_{-3},r_{0}$, respectively,  have the 
same distributions.

For $n\in\mn$ and $k\in\mathbb{Z}$ let 
$M_{k,n}
=(C_{k+i-1,k+j-1},\;\, i,j=1,\ldots,n)$ be the $n\times n$ matrix associated with records $r_{k},r_{k+1},\ldots,r_{k+n}$.
Obviously from the above, $M_{k,n}$ is independent of $r_k$ and satisfies $M_{k,n}\stackrel{d}{=}M_{1,n}$ 
for $k=1,2,\dots$.
For $k\leq 0$, $M_{k,n}$ is not independent of $r_k$, since 
the transition probability from $r_k=(t,x)$ to $r_{k+1}$ 
accounts for the condition that $-k$ records must lie south-east of $(t,x)$  
above the bisectrix.
Also, $M_{k,n}\stackrel{d}{=}M_{1,n}$ fails for $-n< k\leq 0$:
for instance, $C_{0,0}$ and $C_{1,1}$ have different distributions.
Nevertheless, we will show that $M_{k,n}\stackrel{d}{=}M_{1,n}$ holds for  $k\leq -n$, 
which by virtue of (\ref{Rrefl}) is equivalent to the following random symmetry property of $M_{1,n}$.

Let $M^\star_{1,n}$ be the matrix  
obtained by reflecting $M_{1,n}$ about the antidiagonal, 
that  is by exchanging each entry $(i,j)$ with entry $(n-j+1, n-i+1)$.

\begin{proposition}\label{P1}  For $n=1,2,\dots$
\begin{equation}\label{star}
M_{1,n}^{\star}\stackrel{d}{=} M_{1,n}.
\end{equation}
\end{proposition}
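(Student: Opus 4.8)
The plan is to derive \re{star} by playing the reflection symmetry \re{Rrefl} against the \emph{area-preserving scale invariance} of the Poisson process ${\cal P}$; these are genuinely different operations, and reflection alone only reproduces \re{star} in circular form, so the scaling is the step that supplies new information. As a preliminary reduction I would make the equivalence mentioned before the statement explicit for a single window. Restricting \re{Rrefl} to $1\le i,j\le n$ gives $M_{1,n}\stackrel{d}{=}(C_{-j-1,-i-1})_{i,j=1}^{n}$, and reindexing by $a=n+1-j,\ b=n+1-i$ identifies the right-hand array with $(M_{-n-1,n})^\star$. Thus $M_{1,n}\stackrel{d}{=}(M_{-n-1,n})^\star$, so that \re{star} is \emph{equivalent} to the single identity $M_{1,n}\stackrel{d}{=}M_{-n-1,n}$, i.e. to the assertion that a forward window below the bisectrix and a far backward window above it carry the same tiling law.

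Next I would record that both families of windows are internally homogeneous. For forward windows this is already in \re{R}: $M_{1,n}$ is a function of the i.i.d. increments $(E_i,U_i)$ and is independent of $r_1$, whence $M_{k,n}\stackrel{d}{=}M_{1,n}$ for all $k\ge1$. For backward windows I would use a spatial independence argument: conditionally on the last above-bisectrix record $r_0$, the records $r_{-1},r_{-2},\dots$ are determined by the restriction of ${\cal P}$ to the region north-west of $r_0$, which is independent of the south-east region that decides where the chain crosses $t=x$. Hence the above-bisectrix chain is driven by its own i.i.d. increments, its tile matrix is independent of the position of $r_0$, and the labelling constraint does not disturb it; consequently $M_{k,n}\stackrel{d}{=}M_{-n-1,n}$ for every $k\le-n$, with common law $\mathcal{L}_-$.

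The heart of the argument is to equate the forward and backward bulk laws by scaling. Writing $S_\lambda(t,x)=(\lambda t,\,x/\lambda)$, the measure ${\cal P}$ is invariant under $S_\lambda$, so $S_\lambda{\cal R}\stackrel{d}{=}{\cal R}$; moreover $S_\lambda$ preserves the $\searrow$-ordering of records and every tile \emph{area}, while moving the bisectrix and hence shifting the labelling. For $\lambda$ small enough every record of the below-bisectrix window $r_1,\dots,r_{n+1}$ is carried strictly above $t=x$, so it becomes a window of $S_\lambda{\cal R}$ with all indices $\le0$, i.e. a backward-bulk window, whose tile matrix equals $M_{1,n}$ because areas are preserved. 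Since $S_\lambda{\cal R}\stackrel{d}{=}{\cal R}$ and backward-bulk windows share the law $\mathcal{L}_-$, this gives $M_{1,n}\stackrel{d}{=}M_{-n-1,n}$, which with the first paragraph yields \re{star}.

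The main obstacle is the rigour of this scaling step: the shift $K_\lambda$ by which the labelling moves is random and depends on the entire configuration, so the scaled window lands at a random index $k_\lambda\le-n$, and I must check that $M_{k_\lambda,n}(S_\lambda{\cal R})$ still has the bulk law $\mathcal{L}_-$ despite this data-dependence. I expect to settle this using the decoupling from the second paragraph: the tile matrix of the far window is a function of increments disjoint from those that fix the crossing location, so conditioning on $\{k_\lambda\le-n\}$ does not bias it; equivalently one may phrase the whole invariance as a stationarity (mass-transport) property of the sequence of window matrices along the chain, the bisectrix being the unique defect. Establishing this independence cleanly is the delicate point, and everything else is bookkeeping.
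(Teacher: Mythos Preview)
Your reduction of \re{star} to $M_{1,n}\stackrel{d}{=}M_{-n-1,n}$ is correct, and you have rightly identified scale invariance as the ingredient needed beyond the reflection \re{Rrefl}. But the decoupling you invoke to handle the random shift does not hold. In the forward picture, $M_{1,n}$ is the explicit function \re{R} of the increments $(E_1,U_1),\dots,(E_n,U_n)$, while the shift $K_\lambda$ is the number of records $r_1,r_2,\dots$ carried above $t=x$ by $S_\lambda$, hence is determined by the ratios $t_j/x_j$; these ratios depend on $r_1$ together with \emph{exactly the same} increments $(E_1,U_1),(E_2,U_2),\dots$. So the window and the crossing location are not built from disjoint data, and conditioning on $\{k_\lambda\le -n\}$ genuinely biases the tile matrix. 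Phrased in the backward chain of $S_\lambda{\cal R}$, the shift $K_\lambda$ is a stopping time, but the window you need lies in the \emph{past} of that stopping time, so strong Markov is of no help either. Note too that a direct computation of your backward bulk law ${\cal L}_-$ from the i.i.d.\ backward increments yields exactly the law of $M_{1,n}^\star$, i.e.\ it just reproduces \re{Rrefl} and adds nothing. Without a further idea the global scaling step remains a genuine gap.

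The paper closes this gap by applying the scaling \emph{locally} rather than to the whole process. One conditions on the rectangle $A$ spanned by $r_1$ and $r_{n+1}$: given its corners, the intermediate records $r_2,\dots,r_n$ have the law of the $A$-records of an independent Poisson process in $A$, conditioned on there being exactly $n-1$ of them, and this depends on $A$ only through its area (Lemma~\ref{L3}). Now choose $\lambda$ \emph{deterministically}, as a function of $A$, so that $S_\lambda$ maps $A$ to a square; the Poisson law, the conditioning event, and every tile area are preserved, and the square is invariant under reflection in its diagonal, giving $N\stackrel{d}{=}N^\star$ for the matrix of tile areas (Lemma~\ref{L2}). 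Hence \re{star} holds conditionally on the area of $A$, and therefore unconditionally. By first conditioning on the bounding box one scales by a nonrandom amount, so the random-shift problem never arises.
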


\vskip0.3cm
\noindent
Identity (\ref{cc}) is the $n=2$ instance of (\ref{star}). Identity (\ref{B=C}) appears by calculating the sum 
 of all entries of $M_{1,n+1}$ except the entries in the $(n+1)$st row.
Further identities can be derived by applying functions, e.g., taking the product of matrix elements in  the first row of $M_{1,n}$:
$${E_1\cdots E_n (1-U_1)^n\over U_1^{n-1}U_2^{n-2}\cdots U_{n-1}}
\stackrel{d}{=} {E_n^n(1-U_1)\cdots (1-U_n)\over U_1 U_2^2\cdots U_{n-1}^{n-1}}\,.$$


\paragraph{3. Records in a finite box}
To prove (\ref{star}) we consider records in finite rectangles.
Let $A\subset {\mathbb R}_+^2$  be a finite open rectangle with sides parallel to the coordinate axes.
Atom $a\in {\cal P}\cap A$ will be called  $A$-record if no other atom $b\in {\cal P}\cap A$ lies south-west of $a$. 
The set of $A$-records induces a random partition of $A$ in rectangular tiles. Denote by
$N=(N_{i,j})$ the random matrix of areas of the tiles.
The number of rows (or columns) of $N$ is a random variable that is one plus the number of $A$-records. 
Let  $N^{\star}=(N^*_{i,j})$ be the array obtained by reflecting $N$ about the  antidiagonal, 
which is defined conditionally on the number of $A$-records.

\begin{lemma}\label{L2} For every rectangle $A$ we have
\begin{itemize}
\item[\rm (i)]   $N\ed N^{\star}$,
\item[\rm(ii)]  also $N\ed N^{\star}$ conditionally given the number of $A$-records is $n$, for each $n\geq 0$,
\item[\rm(iii)] the distribution of $N$ depends on $A$ only through the area of $A$.
\end{itemize}
\end{lemma}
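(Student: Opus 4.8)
The plan is to derive all three claims from two families of transformations of the plane that leave the unit-intensity law of $\mathcal P$ invariant, preserve Lebesgue area, and respect the south-west order $\nearrow$, so that they map the set of $A$-records bijectively onto itself and carry tiles to tiles of equal area. The two families are the anisotropic scalings $\psi_c:(t,x)\mapsto(ct,x/c)$, $c>0$, together with translations, and the affine reflection $\phi$ of $A$ about its main (south-west to north-east) diagonal, which can be written as the bisectrix reflection $(t,x)\mapsto(x,t)$ followed by a suitable $\psi_c$ and a translation chosen so that $\phi(A)=A$. Each of these maps has Jacobian of modulus one and is coordinatewise monotone, hence preserves both areas and the order $\nearrow$; and since $\mathcal P$ has constant unit intensity, the images of $\mathcal P$ under $\phi$ and under $\psi_c$ agree in law with $\mathcal P$ on $A$.

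For (i) and (ii) I would use $\phi$. Being order preserving and a bijection of $A$ onto itself, $\phi$ sends the south-west-minimal points of any configuration to the south-west-minimal points of its image, so it maps $A$-records to $A$-records and in particular leaves the number of records unchanged. I would then check, by following the grid lines, that $\phi$ sets up an order-reversing correspondence between the pencil of horizontal lines (counted from the top) and the pencil of vertical lines (counted from the left), so that the tile in row $i$, column $j$ is carried to the tile in row $n+2-j$, column $n+2-i$ of the image, with the same area. Writing this as the pointwise identity $N(\phi\omega)=N^\star(\omega)$ on configurations $\omega$ with a fixed number $n$ of records, and using that the image of $\mathcal P$ under $\phi$ has the same law as $\mathcal P$ on $A$, gives $N\ed N^\star$ both unconditionally and conditionally on the number of records, which is (i) and (ii).

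For (iii) I would use the scalings and translations. Given two rectangles of equal area, there is a $c>0$ and a translation whose composition with $\psi_c$ is an area- and order-preserving affine bijection taking one rectangle onto the other while preserving the law of $\mathcal P$ restricted to them; since it is coordinatewise increasing it preserves both the time- and the value-orderings of the records, hence matches the two tilings entry by entry. Thus the law of $N$ depends on $A$ only through its area.

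The routine but delicate part is the index bookkeeping in (i): one must verify that $\phi$ produces exactly the antidiagonal reflection $(i,j)\mapsto(n+2-j,n+2-i)$ rather than a transpose or a $180^\circ$ rotation. This is precisely why I take $\phi$ to be the order-preserving diagonal reflection built from the bisectrix reflection, rather than the point reflection about the centre of $A$: the latter reverses $\nearrow$ and would turn records into north-east-maximal points, which tile $A$ differently. The random dimension of $N$ causes no trouble, because the identity $N(\phi\omega)=N^\star(\omega)$ holds configuration by configuration and preserves the record count, so it descends to each conditional law.
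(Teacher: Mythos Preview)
Your argument is correct and is essentially the paper's own proof: the paper applies the hyperbolic shift $(t,x)\mapsto(\lambda t,x/\lambda)$ to reduce $A$ to a square (yielding (iii)), and then invokes the symmetry of the square about its south-west to north-east diagonal to obtain (i) and (ii). Your construction of $\phi$ as bisectrix reflection composed with a scaling and a translation is precisely this diagonal reflection written out for a general rectangle, and your explicit index computation $(i,j)\mapsto(n+2-j,n+2-i)$ fills in the detail that the paper leaves to a figure.
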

\begin{proof} 
Applying a hyperbolic shift $(t,x)\mapsto (\lambda t, x/\lambda)$ with some $\lambda>0$,
 rectangle $A$ can be mapped onto a square. The mapping preserves the area and the coordinate-wise orders, hence preserves the distribution of
$N$. For $A$ a square (i) and (ii) follow by symmetry of $A$ about the north-east diagonal (see Figure 2).

\end{proof}
\begin{figure}
\begin{center}
\includegraphics[scale=0.4]{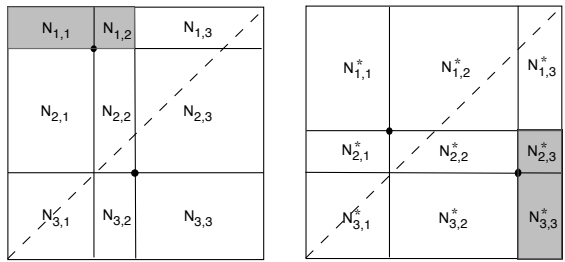}
\caption{Records in a square}
\end{center}
\end{figure}

\begin{lemma}\label{L3} For $n\geq 1$ the following conditional distributions coincide:
\begin{itemize}
\item[\rm(a)] the distribution of $M_{k,n}$ given the area $v$ of the rectangle spanned on records $r_k$ and $r_{n+k}$,  
where $k\geq 1$ or $k\leq -n-1$, 
\item[\rm(b)] the distribution of $N$ for a rectangle $A$ of area $v$, given that the number of $A$-records is $n-1$. 
\end{itemize}
\end{lemma}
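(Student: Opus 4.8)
The plan is to realise the conditional setup (a) as an instance of the finite-box picture (b), by attaching to the block $r_k,\dots,r_{k+n}$ the right random rectangle and then appealing to Lemma \ref{L2}. For $k\ge 1$ or $k\le -n-1$ let $A^\ast$ be the rectangle with north-west corner $r_k$ and south-east corner $r_{k+n}$; its area is exactly the quantity $v$ on which (a) conditions. I would first establish the pathwise identity $N(A^\ast)=M_{k,n}$ by showing that the $A^\ast$-records are precisely the $n-1$ intermediate records $r_{k+1},\dots,r_{k+n-1}$. One inclusion is immediate: each $r_{k+i}$ is a global record, so nothing in $\cal P$, a fortiori nothing in $A^\ast$, lies south-west of it. For the converse, suppose $a\in{\cal P}\cap A^\ast$ is an $A^\ast$-record that is not a global record, and let $b\in\cal P$ lie south-west of $a$ (so $b\nearrow a$). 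Since $a$ is an $A^\ast$-record, $b\notin A^\ast$, which with $t_b<t_a$ and $x_b<x_a$ forces $t_b<t_k$ or $x_b<x_{k+n}$; in the first case $b\nearrow r_k$ and in the second $b\nearrow r_{k+n}$, each contradicting that $r_k,r_{k+n}$ are records. Hence every $A^\ast$-record is a global record lying strictly between $r_k$ and $r_{k+n}$ in the chain, the induced grid coincides with the $M_{k,n}$-grid, and the number of $A^\ast$-records equals $n-1$. This pathwise step is the backbone of the argument.

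Next I would turn this into an equality of conditional laws. Condition on the corner records $r_k=(a,d)$ and $r_{k+n}=(b,c)$, equivalently on $A^\ast=[a,b]\times[c,d]$. The events that pin the corners down, namely emptiness of the staircase region south-west of $r_k$ and of $r_{k+n}$ (making them records), and, in the negative-index case, the normalisation placing $r_0,r_1$ across the bisectrix, all refer to atoms of $\cal P$ lying outside the open rectangle $A^\ast$, in regions disjoint from its interior precisely because $k\ge 1$ or $k\le -n-1$ keeps the whole block on one side of the special pair. By independence of the Poisson process over disjoint regions these exterior conditions factor out, so the conditional law of ${\cal P}\cap A^\ast$ given $A^\ast=[a,b]\times[c,d]$ is that of a unit-intensity Poisson process in a rectangle of dimensions $(b-a)\times(d-c)$ conditioned only to have exactly $n-1$ records. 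By the first step its record tiling is $N(A^\ast)=M_{k,n}$, so this is exactly the law described in (b) for the box $[a,b]\times[c,d]$.

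Finally I would strip off the dependence on the shape of the box. By Lemma \ref{L2}(iii) together with its conditional form \ref{L2}(ii), the law of $N$ for a box conditioned on a prescribed record count depends on the box only through its area; averaging the displayed conditional law over all admissible corners with $(b-a)(d-c)=v$ therefore collapses to the single law (b) for an arbitrary rectangle of area $v$ conditioned on $n-1$ records, which is the conditional law in (a). I expect the main obstacle to be this second step: making rigorous the disintegration that converts ``$r_k,r_{k+n}$ are the $k$-th and $(k+n)$-th records sitting at these corners'' into ``a plain Poisson box conditioned on its record count'', i.e. verifying that the exterior, Palm-type conditioning genuinely decouples from the interior. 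This is exactly where the hypothesis $k\ge 1$ or $k\le -n-1$ is indispensable, since for $-n<k\le 0$ the bisectrix normalisation reaches into the interior of $A^\ast$ and the decoupling breaks down.
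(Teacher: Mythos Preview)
Your proposal is correct and follows essentially the same route as the paper: identify the $A^\ast$-records with the intermediate global records $r_{k+1},\dots,r_{k+n-1}$, use independence of the Poisson process inside and outside the box to decouple the corner-pinning conditions from the interior, and then invoke Lemma~\ref{L2}(iii) to reduce to the area. The paper's proof is much terser (four sentences), simply asserting the independence and the coincidence of the two record sets; you have spelled out the case analysis for the pathwise identity and have correctly flagged the disintegration/Palm step as the place needing care, including the role of the hypothesis $k\ge 1$ or $k\le -n-1$ in keeping the bisectrix labeling exterior to $A^\ast$.
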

\begin{proof} For any fixed rectangle $A$, the set of $A$-records is independent of the Poisson point process outside $A$.
On the other hand, given that the north-west and the south-east corners of $A$ are records,
$\cal P$ has no atoms south-east of these corners, hence the set of $A$-records coincides with ${\cal R}\cap A$. 
That is to say, given that two records are located at the corners of $A$, the records inside $A$ are distributed like $A$-records.
In the same way, taking,  
for instance, $k=1$, we have:  given that  $r_1$ and $r_{n+1}$ are at the corners of  rectangle $A$ (below the line $x=t$), 
the set $\{r_2,\dots,r_n\}$ has the same distribution as the set of $A$-atoms, conditioned on the event that the number of $A$-records is $n-1$. 
Now the assertion follows from Lemma \ref{L2} (iii).
\end{proof}

\vskip0.3cm
Proving Proposition \ref{P1} is now easy.
Combining Lemma \ref{L3} with Lemma \ref{L2} (ii) we see that the distributional identity (\ref{star}) holds conditionally on the area of
the rectangle spanned on $r_1$ and $r_{n+1}$, hence (\ref{star}) also holds unconditionally.
Note that the area is equal to the sum of all entries of  the $n\times n$ matrix, that is distributed like 

$$(1-U_1\cdots U_n)\left( E_1+\frac{E_2}{U_1}+  \frac{E_3}{U_1U_2}+\cdots+ \frac{E_n}{U_1\cdots U_{n-1}}  \right).$$

\par We could not find a proof of  (\ref{B=C}) by 
computing  densities or transforms, or by connecting to other known identities
 like `beta-gamma algebra' \cite{Dufresne}. Spanning a grid on the point process 
of $k$-corners \cite{Corners} we were able to show that similar  identities 
 hold with uniform distribution replaced by beta$(1,\theta)$.


\begin{thebibliography}{99}

\bibitem{ABTLemma} R. Arratia, A. Barbour and S. Tavar{\'e}(2006). A tale of three couplings: Poisson-
Dirichlet and GEM approximations for random permutations, 
{\it Combin. Probab. Comput.}
{\bf 15}, 31--62.


\bibitem{Yul} Yu. M. Baryshnikov (2000).
Supporting-points processes and some of their applications.
{\it Probab. Theory Related Fields}, {\bf 117}, 163-182. 



\bibitem{Dufresne} D. Dufresne (1998). Algebraic properties of beta and gamma distributions, and applications,
{\it Adv. Appl. Math.} {\bf 20}, 285-299.
\bibitem{PPP} A. Gnedin  (2004) Best choice from the planar Poisson process, {\it Stoch. Proc. Applications} {\bf 111}, 317--354.

\bibitem{Corners} A. Gnedin (2008). Corners and records of the Poisson process in the quadrant, 
{\it Elect. Comm. in Probab.} {\bf 13}, 187--193.


\bibitem{Ignatov}   C. M. Goldie and L. C. G. Rogers (1984).  The $k$-records
are i.i.d.,  {\it Probability Theory Rel Fields} {\bf 67},
197--211.



\bibitem{Samuels}  Samuels, S.M. (2004). Why do these quite different best-choice problems have the same solutions?
{\it Adv. Appl. Prob}. {\bf 36}, 398--416.

\end{thebibliography}
\end {document}